\theoremstyle{remark}{

\newtheorem{Prob}{{\rm Problem}}

}
\theoremstyle{plain}
{

\newtheorem{Thm}{Theorem}

}
\begin{document}
\title[Moment-like maps and algebraic functions with prescribed preimages]{Moment-like maps and real algebraic functions with prescribed preimages}
\author{Naoki kitazawa}
\keywords{(Non-singular) real algebraic manifolds and real algebraic maps. Smooth maps. Morse(-Bott) functions. Reeb graphs. Moment maps. Deformations of smooth maps. \\
\indent {\it \textup{2020} Mathematics Subject Classification}: Primary~14P05, 14P25, 57R45, 58C05. Secondary~57R19.}

\address{
}
\email{naokikitazawa.formath@gmail.com}
\urladdr{https://naokikitazawa.github.io/NaokiKitazawa.html}
\maketitle
\begin{abstract}
We discuss a problem on singularity theory of differentiable (smooth) or real algebraic maps which is different from knowing existence and has been difficult: constructing explcit real algebraic functions.

We discuss construction of real algebraic functions with exactly one singular value, the singular points being of definite type, and prescribed preimages of single points.
We discuss generalizations of the canonical projection of the unit sphere around the pole and the Morse-Bott functions around the boundaries of the images with preimages diffeomorphic to the torus.
 
This has been discussed in the differentiable (smooth) category since the pioneering study of Sharko in 2006, followed by Masumoto-Saeki, Michalak and so on: the author has first considered the cases respecting the topologies of the preimages of the points where these studies had not done this essentially. Related real algebraic studies have been started by the author essentially in 2020's and the studies are developing, mainly due to the author.

\end{abstract}
\section{Introduction with short exposition on our result, fundamental terminologies, notions and notation, and the content of the present paper.}
\label{sec:1}
\subsection{Our result and some related history.}
We discuss the following problem. Some terminologies will be explained later.
\begin{Prob}
\label{prob:1}
Let $F$ be a smooth, closed and connected manifold of dimension $m-1$ with $m \geq 2$. Let $a<b$ be real numbers.
Do there exist a smooth, compact and connected manifold $\tilde{M}$ and a smooth function ${\tilde{f}}_{\tilde{M},a,b}:\tilde{M} \rightarrow \mathbb{R}$ with the following?
\begin{enumerate}
\item The function ${\tilde{f}}_{\tilde{M},a,b}:\tilde{M} \rightarrow \mathbb{R}$ is the restriction of the canonical projection ${\pi}_{m+k,1}$.
\item $\tilde{M}$ is a subset of the zero set $M$ of some real polynomial map which is also non-singular.
\item The value $b$ is the unique singular value of the function ${\tilde{f}}_{\tilde{M},a,b}$. Its image is $[a,b]=\{a \leq t \leq b\}$.
\item The  preimage ${{\tilde{f}}_{\tilde{M},a,b}}^{-1}(a)$ is diffeomorphic to $F$. 
\end{enumerate}
\end{Prob}

This has been considered in the differentiable situations since \cite{sharko}, essentially. In short, only the third and fourth conditions are respected.
\cite{masumotosaeki, michalak} follow this. They are essentially on the case $F$ is the circle $S^1$.

\cite{kitazawa1, kitazawa4} follow them by respecting the types of the preimages of the points first. For example, in the case $F$ is the unit sphere, a closed, connected and orientable surface, or a closed, connected and non-orientable surface whose genus is even, this is affirmatively shown. 

Our new result consists of several real algebraic affirmative answers. Related real algebraic theory has been launched by the author essentially and for our pioneering study, see \cite{kitazawa2} for example and see also \cite{kitazawa3}.

We can also consider the case where the unique singular value of the function is in the interior of the image, diffeomorphic to a closed interval, and not an extremum. Related cases are presented in the studies exhibited before. For example, 
\cite{kitazawa1, kitazawa4} have considered the case of Morse functions with the preimages containing no singular points of the functions being diffeomorphic to disjoint unions of spheres $S^{m-1}$ and products of two spheres $S^{k} \times S^{m-k-1}$ ($1 \leq k<m-1$), in the differentiable (smooth) category. Some real algebraic result is also in our preprint \cite{kitazawa7} and see also \cite{kitazawa3} again. We do not consider such cases in our paper.

In our general viewpoint, our study is also on a branch of singularity theory of differentiable (smooth) maps and real algebraic ones and applications to differential topology and real algebraic geometry. We can expect related contributions in the future. Especially, existence of such maps and manifolds is important of course and construction of these objects is another problem and has been also important and difficult. We are deeply concerned with such a problem on construction.
\subsection{Fundamental terminologies, notions and notation.}
We review and introduce several terminologies, notions and notation.
We use ${\mathbb{R}}^k$ for the $k$-dimensional Euclidean space where $\mathbb{R}:={\mathbb{R}}^1$. Let $||x|| \geq 0$ denote the distance of $x \in {\mathbb{R}}^k$ and the origin $0 \in {\mathbb{R}}^k$ under the standard Euclidean metric. 
Let ${\pi}_{k,k_1}:{\mathbb{R}}^{k} \rightarrow {\mathbb{R}}^{k_1}$ denote the map mapping each point $x=(x_1,x_2) \in {\mathbb{R}}^{k_1} \times {\mathbb{R}}^{k_2}={\mathbb{R}}^k$ to $x_1 \in {\mathbb{R}}^{k_1}$ with the conditions $k_1, k_2>0$ and $k=k_1+k_2$: this is the canonical projection. The canonical projection of the ($k-1$)-dimensional unit sphere $S^{k-1}:=\{x \in {\mathbb{R}}^k \mid ||x||=1\}$ is the restriction ${\pi}_{k,k_1} {\mid}_{S^{k-1}}$. Let $D^k:=\{x \in {\mathbb{R}}^k \mid ||x|| \leq 1\}$ denote the $k$-dimensional unit disk.

For a smooth manifold $X$, let $T_xX$ denote the tangent vector space at $x$. For a smooth map $c:X \rightarrow Y$, the differential ${dc}_x:T_xX \rightarrow T_{c(x)}Y$ at $x$ is a linear map and if the rank drops, then $x$ is a {\it singular} point of $c$ and $c(x)$ is a {\it singular} value of $c$.

Our {\it real algebraic} manifold is a union of connected components of the zero set of a real polynomial map. A {\it non-singular} real algebraic manifold is defined bases on the implicit function theorem for the real polynomial map. The real affine space and the unit sphere are of simplest examples of non-singular real algebraic manifolds. Our {\it real algebraic} map is the composition of the canonical embedding into some real affine space with the canonical projection.
Let $\mathbb{N} \subset \mathbb{R}$ denote the set of all positive integers. For a real number $a$, let ${\mathbb{N}}_a:=\{i \in \mathbb{N} \mid i \leq a \}$ (e.g. for $i \in \mathbb{N}$, ${\mathbb{N}}_i:=\{1,\cdots,i\}$ ).
\subsection{The content of the present paper.}
In the next section, we define our {\it moment-like} maps, reviewing and respecting our preprint \cite{kitazawa7}.
The third section shows our main result explicitly. Our result is some positive answers to Problem \ref{prob:1} and related corollary.
The fourth section is devoted to the proof of our main result. The fifth section is devoted to another additional result, answering Problem \ref{prob:1} again. The fifth section is also devoted to future problems.
\section{Moment-like maps.}
A {\it Morse} function $c_0$ is a smooth function such that both the following hold.
\begin{itemize}
\item Each singular point of the function is in the interior of the manifold of the domain.
\item At each singular point of it, for suitable local coordinates respecting the order of the real numbers (or $\mathbb{R}$), it is represented by the form $c_0(x_1,\cdots,x_m)={\Sigma}_{j=1}^{m-i(p)} {x_j}^2-{\Sigma}_{j=1}^{i(p)} {x_{m-i(p)+j}}^2+c_0(p)$.
\end{itemize}
A {\it Morse-Bott} function $c$ is a smooth function such that both the following hold.
\begin{itemize}
\item Each singular point of the function is in the interior of the manifold of the domain.
\item For suitable local coordinates, it is represented as the composition of a smooth map with no singular points with a {\it Morse} function $c_0$.
\end{itemize}

For each singular point of the Morse function $c_0$, $i(p)$ can be defined as the unique integer $0 \leq i(p) \leq m$ and called the {\it index} of $p$ {\it for $c_0$}.

A singular point of a Morse function is also characterized in the following way. A singular point of a smooth function is represented as a singular point of some Morse function if and only if the value of the Hessian $\det {(\frac{\partial c_0}{\partial x_i \partial x_j})}_{(i,j) \in {\mathbb{N}}_m \times {\mathbb{N}}_m}$ there is not $0$ for some local coordinates.

The canonical projection ${\pi}_{m,1} {\mid}_{S^{m-1}}$ of the unit sphere is of simplest Morse functions.

Morse-functions are regarded as Morse-Bott functions by our definitions. For Morse functions, see \cite{milnor} and for Morse-Bott functions, see \cite{bott}, for example.

Hereafter, we implicitly apply fundamental arguments in real algebraic geometry, explained in \cite{bochnakcosteroy, kollar} and singularity theory of differentiable maps, explained in \cite{golubitskyguillemin}, for example.

See also our preprints \cite{kitazawa7, kitazawa8} where we do not assume the arguments and results of these preprints of the author. For example, Theorem \ref{thm:1} is our main result of \cite{kitazawa7} and proven here in a shorter argument where essential arguments are same as those in the original study.

\begin{Thm}
\label{thm:1}
Let $\{f_j\}_{j=1}^{l_1}$ be a family of real polynomial functions such that $S_j$ is a disjoint union of some connected components of the zero set of $f_j$ and that $S_j$ is non-singular. Let $m_{l_1,l_2}:{\mathbb{N}}_{l_1} \rightarrow {\mathbb{N}}_{l_2}$ be a surjective map and $m_{l_2}$ an integer-valued map on ${\mathbb{N}}_{l_2}$ whose values are always non-negative. Let $D \subset {\mathbb{R}}^n$ be a non-empty open set where we abuse $\overline{D}$ for its closure in ${\mathbb{R}}^n$. Assume also the following.
\begin{enumerate}
\item \label{thm:1.1} The relation $D={\bigcap}_{j=1}^{l_1} \{x \in {\mathbb{R}}^n \mid f_j(x)>0 \}$ and $S_j \bigcap \overline{D}=\overline{D} \bigcap \{x \in {\mathbb{R}}^n \mid f_j(x)=0\}$ hold.
\item \label{thm:1.2} The set $S_j \bigcap \overline{D}$ is non-empty for each $1 \leq j \leq l_1$.
\item \label{thm:1.3} For any integer ${l_1}^{\prime}$ satisfying $1 \leq {l_1}^{\prime} \leq l_1$, and any increasing sequence $\{k_j\}_{j=1}^{{l_1}^{\prime}}$ of integers satisfying $1 \leq k_j \leq l_1$ with ${\bigcap}_{j=1}^{{l_1}^{\prime}} S_{k_j} \bigcap \overline{D}$ being non-empty, a condition {\rm (}\ref{thm:1.3.1}{\rm )} on the transversality holds, and an additional condition {\rm (}\ref{thm:1.3.2}{\rm )} on the map $m_{l_1,l_2}$ holds. 
\begin{enumerate}
\item \label{thm:1.3.1} For any point $p \in {\bigcap}_{j=1}^{{l_1}^{\prime}} S_{k_j} \bigcap \overline{D}$, the dimension of ${\bigcap}_{j=1}^{{l_1}^{\prime}} T_p  S_{k_j}$ is $n-{l_1}^{\prime}$.
\item \label{thm:1.3.2} The restriction of $m_{l_1,l_2}$ to the set $\{k_j\}_{j=1}^{{l_1}^{\prime}}$ is injective.
\end{enumerate}
\end{enumerate}
Then the set

$$M:=\{(x,y)=(x,\{y_{i_1,i_2}\}_{(i_1,i_2) \in \{(a,b) \in {\mathbb{N}}_{l_2} \times \mathbb{N} \mid 1 \leq b \leq m_{l_2}(a)+1\}}) \in {\mathbb{R}}^n \times {\prod}_{i_1=1}^{l_2} {\mathbb{R}}^{m_{l_2}(i_1)+1} \mid f_i(x)-{||y_{i}||}^2=0, i \in {\mathbb{N}}_{l_2}\}$$
 is the zero set of the real polynomial map being also non-singular {\rm (}we use the notation $y_i=(y_{i,1} \cdots y_{i,m_{l_2}(i)+1})${\rm )}. This is of {\rm (}${\Sigma}_{j=1}^{l_2} (m_{l_2}(j))+n${\rm )}-dimensional. 
We call the map ${\pi}_{{\Sigma}_{i=1}^{l_2} (m_{l_2}(i)+1),n} {\mid}_M$ the {\rm moment-like map reconstructed from} $(D,\{f_j\}_{j=1}^{l_1},\{S_j\}_{j=1}^{l_1},m_{l_1,l_2},m_{l_2})$ and let it be denoted  by $f_{(D,\{f_j\}_{j=1}^{l_1},\{S_j\}_{j=1}^{l_1},m_{l_1,l_2},m_{l_2})}$. It also follows that $f(M)=\overline{D}$.
\end{Thm}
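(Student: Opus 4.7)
The plan is to apply the implicit function theorem to the polynomial map $G:{\mathbb{R}}^n\times{\prod}_{i=1}^{l_2}{\mathbb{R}}^{m_{l_2}(i)+1}\to{\mathbb{R}}^{l_2}$ whose $i$-th component is $g_i(x,y):=f_i(x)-{||y_i||}^2$, and then to check the image equality $f_{(\cdots)}(M)=\overline{D}$ by a direct double inclusion. At a point $(x,y)\in M$, the Jacobian $dG$ decomposes into an $x$-block whose $i$-th row is $\nabla f_i(x)$ and a block-diagonal $y$-block whose $i$-th block is the single row $-2y_i$; the goal is to verify that $dG$ has full rank $l_2$ everywhere on $M$.

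I would split by the index set $I:=\{i \in {\mathbb{N}}_{l_2} \mid y_i=0\}$. For every $i \notin I$ the $y$-block contributes a non-zero row supported in a coordinate block disjoint from all other $y$-blocks, so the rows indexed by ${\mathbb{N}}_{l_2}\setminus I$ are automatically linearly independent from each other and from every linear combination of the rows indexed by $I$. Consequently, non-singularity reduces to showing that the gradients $\{\nabla f_i(x)\}_{i \in I}$ are linearly independent in ${\mathbb{R}}^n$ whenever $I$ is non-empty. But $i\in I$ forces $f_i(x)={||y_i||}^2=0$, and hypothesis (\ref{thm:1.1}) then places $x$ in ${\bigcap}_{i\in I} S_i \cap \overline{D}$. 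Using the surjectivity of $m_{l_1,l_2}$ together with the injectivity clause (\ref{thm:1.3.2}), I would lift $I$ to an increasing sequence $\{k_j\}\subset{\mathbb{N}}_{l_1}$ of length $|I|$ on which the transversality condition (\ref{thm:1.3.1}) applies; that condition says precisely that $\dim{\bigcap}_{j} T_x S_{k_j} = n-|I|$, i.e.\ that the corresponding gradient rows are independent. Hence $dG$ has rank $l_2$ at every point of $M$, and the implicit function theorem yields a non-singular real algebraic manifold of the claimed dimension $n+{\Sigma}_{i=1}^{l_2}(m_{l_2}(i)+1)-l_2 = n+{\Sigma}_{i=1}^{l_2} m_{l_2}(i)$.

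The image statement is then routine. If $x\in\overline{D}$, hypothesis (\ref{thm:1.1}) gives $f_i(x)\geq 0$ for every $i\in{\mathbb{N}}_{l_2}$, and choosing $y_i=(\sqrt{f_i(x)},0,\ldots,0)$ produces a preimage in $M$. Conversely, any $(x,y)\in M$ satisfies $f_i(x)={||y_i||}^2\geq 0$ for every relevant $i$, which together with hypothesis (\ref{thm:1.1}) forces $x\in\overline{D}$.

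The principal obstacle is the rank argument in the case $I\neq\emptyset$: the transversality hypothesis is phrased on the family $\{S_j\}_{j=1}^{l_1}$ indexed by ${\mathbb{N}}_{l_1}$, while the defining equations of $M$ are indexed by ${\mathbb{N}}_{l_2}$. The crux of the proof is therefore to justify, via the surjection $m_{l_1,l_2}$ and the injectivity clause (\ref{thm:1.3.2}), that each configuration of vanishing $f_i$'s at a point of $M$ can be realized by an increasing sequence of $S_{k_j}$'s to which (\ref{thm:1.3.1}) applies; once this index translation is made transparent, the rest reduces to bookkeeping.
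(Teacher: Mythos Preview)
Your proposal is correct and rests on the same core idea as the paper's sketch: non-singularity of $M$ comes from the implicit function theorem applied to $G=(g_i)_{i}$, with the transversality hypothesis (\ref{thm:1.3.1}) supplying independence of the $x$-gradients at boundary points and the non-vanishing $y_i$'s handling the remaining rows. The difference is one of packaging. The paper argues geometrically: for each $p\in\overline{D}$, stratified by which $S_{k_j}$'s contain it, it chooses a small disk $D_p$ and identifies the preimage over ${\rm Int}\,D_p$ with an explicit product of disks $D^{m_{l_2}\circ m_{l_1,l_2}(j)+1}$ and spheres $S^{m_{l_2}(j)}$. You go straight for the Jacobian rank. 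Your route is tidier for the bare conclusion (non-singularity and dimension); the paper's route yields the extra information of the local fibre type, which is used implicitly afterwards when reading off the topology of preimages in Theorems~\ref{thm:2} and~\ref{thm:3}.

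One point to tighten: in your inclusion $f(M)\subset\overline{D}$ you deduce $x\in\overline{D}$ from $f_i(x)\geq 0$ for $i\in{\mathbb{N}}_{l_2}$ together with hypothesis (\ref{thm:1.1}), but $D$ is defined by all $l_1$ inequalities, and in general $\overline{D}$ need not equal $\bigcap_{j}\{f_j\geq 0\}$. The paper also glosses this (``we can easily check''), and its resolution is entangled with exactly the ${\mathbb{N}}_{l_1}$--${\mathbb{N}}_{l_2}$ index translation you rightly flag as the crux. Since your rank argument in the case $I\neq\emptyset$ also needs $x\in\overline{D}$ before invoking (\ref{thm:1.1}) to place $x$ in the $S_i$'s, you should make this step explicit rather than fold it into the image discussion at the end.
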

\begin{proof}[{\rm (}A sketch of{\rm )} another exposition of our proof.]
Let $1 \leq {l_1}^{\prime} \leq l_1$ be an integer. Consider any point $p \in {\bigcap}_{j=1}^{{l_1}^{\prime}} S_{k_j} \bigcap \overline{D}$ which is not in any space represented in the form ${\bigcap}_{j=1}^{{l_1}^{\prime}+1} S_{{k^{\prime}}_j} \bigcap \overline{D}$ with $\{k_j\}_{j=1}^{{l_1}^{\prime}}$ being a subsequence of an increasing sequence $\{{k^{\prime}}_j\}_{j=1}^{{l_1}^{\prime}+1}$.
Then for a suitable small smooth submanifold $D_p \subset {\mathbb{R}}^n$ diffeomorphic to $D^n$ and containing $p$ in its interior ${\rm Int}\ D_p$, 
the preimage ${f_{(D,\{f_j\}_{j=1}^{l_1},\{S_j\}_{j=1}^{l_1},m_{l_1,l_2},m_{l_2})}}^{-1}({\rm Int}\ D_p)$ is a smooth manifold of dimension ${\Sigma}_{j=1}^{l_2} (m_{l_2}(j))+n$ with no boundary and diffeomorphic to the interior of the manifold
$D^{n-{l_1}^{\prime}} \times {\prod}_{j=1}^{{l_1}^{\prime}} D^{m_{l_2} \circ m_{l_1,l_2}(j)+1} \times {\prod}_{j \in {\mathbb{N}}_{l_2}-m_{l_1,l_2}({\mathbb{N}}_{{l_1}^{\prime}})}  S^{m_{l_2} \circ m_{l_1,l_2}(j)}$. 
In the case $p$ is in the open set $D$, for a suitable small smooth submanifold $D_p \subset {\mathbb{R}}^n$ diffeomorphic to $D^n$ and containing $p$ in its interior ${\rm Int}\ D_p$, 
the preimage ${f_{(D,\{f_j\}_{j=1}^{l_1},\{S_j\}_{j=1}^{l_1},m_{l_1,l_2},m_{l_2})}}^{-1}({\rm Int}\ D_p)$ is a smooth manifold of dimension ${\Sigma}_{j=1}^{l_2} (m_{l_2}(j))+n$ with no boundary and diffeomorphic to the interior of $D^{n} \times {\prod}_{j \in {\mathbb{N}}_{l_2}} S^{m_{l_2} \circ m_{l_1,l_2}(j)}$.

We give comments on $D_p${\rm :} $D_p$ is chosen in such a way that the boundary $\partial D_p$ and ${\bigcap}_{j=1}^{{l_1}^{\prime}} S_{k_j}$ intersect in the transversal way.

We have checked that $M$ is non-singular. We can easily check $f(M)=\overline{D}$.

This completes important exposition on the proof.

\end{proof}
Note that the resulting maps of Theorem \ref{thm:1} are regarded as smooth maps locally like so-called {\it moment maps} on complex projective spaces and more general toric symplectic manifolds. For related topics, see \cite{delzant} and see also \cite{buchstaberpanov}.
Note also that around each singular value of the map, it is represented as the product of a Morse-Bott function and the identity map on some manifolds for suitable local coordinates. 

\section{Our main result.}
By applying Theorem \ref{thm:1}, we have the following. We prove this in the next section.

\begin{Thm}
\label{thm:2}
Problem \ref{prob:1} is affirmatively solved in the case $F$ is a closed, connected and orientable surface in $m=3$.
\end{Thm}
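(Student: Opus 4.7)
The plan is to apply Theorem~\ref{thm:1} to produce, for each closed orientable surface $F = F_g$ of genus $g$, a non-singular real algebraic $3$-manifold $M$ together with a compact connected submanifold $\tilde M \subset M$ for which the first-coordinate projection has image $[a,b]$, unique singular value $b$, and regular preimage $\tilde f^{-1}(a)$ diffeomorphic to $F_g$. I would organize the argument by the genus.

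For $g = 0$, apply Theorem~\ref{thm:1} with $n = 1$, $l_1 = l_2 = 1$, $f_1(x) = b - x$, $m_{l_2}(1) = 2$, and $D$ an open interval with right endpoint $b$; the resulting $M = \{(x, y) \in \mathbb{R} \times \mathbb{R}^{3} \mid b - x = \|y\|^{2}\}$ is non-singular, and $\tilde M := M \cap \{x \geq a\}$ is a $3$-disk with $\tilde f^{-1}(a) \cong S^{2}$ and unique singular value $b$. For $g = 1$, fix $a' < a$ and apply Theorem~\ref{thm:1} with $n = 1$, $l_1 = l_2 = 2$, $f_1(x) = x - a'$, $f_2(x) = b - x$, $m_{l_1, l_2} = \mathrm{id}$, $m_{l_2}(1) = m_{l_2}(2) = 1$. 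Theorem~\ref{thm:1} then produces the closed non-singular $3$-manifold $M = \{(x, y_1, y_2) \mid x - a' = \|y_1\|^{2},\ b - x = \|y_2\|^{2}\}$ (diffeomorphic to $S^{3}$), and $\tilde M := M \cap \{x \geq a\}$ is a solid torus with $\tilde f^{-1}(a) \cong T^{2}$ and a Morse--Bott critical circle over $b$.

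For $g \geq 2$ the interior fibers of any single moment-like map are products of spheres and cannot yield a surface of genus $\geq 2$ directly; the extra topology must be built into the global shape of $M$. I would propose two complementary approaches. \emph{Inductive:} given a building block realizing $F_{g-1}$, glue on an $F_1$ building block via an algebraic boundary-connected-sum performed in a tubular neighborhood of a small disk in the regular level $\{x = a\}$; choosing transversal defining polynomials at the gluing locus preserves condition~\ref{thm:1.3.1} of Theorem~\ref{thm:1} and thus keeps the total space non-singular. \emph{Direct:} apply Theorem~\ref{thm:1} with $n = 2$ and a bounded open $D \subset \mathbb{R}^{2}$ bounded below by $\{x = a\}$ and above by a polynomial curve tangent to $\{x = b\}$ at exactly $g$ points, producing $g$ disjoint Morse--Bott singular circles at $x = b$ and a genus-$g$ regular level at $x = a$.

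The decisive difficulty is precisely the $g \geq 2$ step: the data must be chosen so that conditions~\ref{thm:1.3.1} and~\ref{thm:1.3.2} of Theorem~\ref{thm:1} hold along every stratum of the multi-component boundary of $\overline D$, while simultaneously ensuring that $\tilde M$ is connected, that $\tilde f^{-1}(a)$ is a connected orientable closed surface of exactly genus $g$, and that $b$ remains the only singular value of $\tilde f$. Once $M$ has been exhibited, the local Morse--Bott normal form established in the proof of Theorem~\ref{thm:1} identifies $\tilde f^{-1}(b)$ as a Morse--Bott singular set and rules out any other singular values, and the identification of $\tilde f^{-1}(a)$ with $F_g$ reduces to a combinatorial check on the shape of $\overline D$ and the fibering of $M$ over it.
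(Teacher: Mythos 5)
Your $g=0$ and $g=1$ constructions are fine, and they coincide with the cases the paper dismisses as immediate from Theorem \ref{thm:1}. But the heart of Theorem \ref{thm:2} is exactly the step you leave open, $g \geq 2$, and neither of your two sketches closes it. The ``direct'' $n=2$ proposal cannot work in principle: with $n=2$ the manifold $M$ produced by Theorem \ref{thm:1} has fibers over interior points of $D$ equal to a product of spheres of total dimension $1$, so the preimage of a regular value $t$ of the composed function is a union, over the segments of $\overline{D} \cap \{x_1=t\}$, of pieces each of which is a circle family over an interval whose ends are either collapsed (an $S^1$ factor degenerating on a wall with $m_{l_2}=1$) or doubled (an $S^0$ factor branching on a wall with $m_{l_2}=0$). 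Collapse--collapse and collapse--double give $S^2$, double--double gives $T^2$; no component of genus $\geq 2$ can arise, whatever tangencies the upper boundary curve has with $\{x=b\}$. The ``inductive'' proposal is not available either: an ``algebraic boundary-connected-sum'' of two zero sets of polynomial maps is not an operation supplied by Theorem \ref{thm:1} or by elementary real algebraic geometry, and you give no mechanism for producing a single non-singular polynomial zero set containing the glued object, which is precisely the constraint (condition (2) of Problem \ref{prob:1}) that makes the problem hard.

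The paper's actual route, which is the missing idea, is to raise the base dimension to $n=3$ and put all the topology into the region $D \subset \mathbb{R}^3$ while using only $S^0$ data ($m_{l_2} \equiv 0$, with $l_2=2$): $D$ is the solid region between two parallel planes $\{x_1=s_1\}$, $\{x_1=s_2\}$, inside one cylinder over a circle in the $(x_2,x_3)$-coordinates (whose top realizes the unique singular value $b$ after composing with the $x_3$-projection), and outside $l_1-3$ further vertical cylinders. The resulting $M \subset \mathbb{R}^5$ is a $3$-dimensional iterated branched double of $\overline{D}$, so the preimage of the level $a$ is the appropriate double of the planar slice $\overline{D} \cap \{x_3=a\}$, a region with $l_1-3$ holes; the paper then identifies this surface as the closed orientable surface of genus $l_1-2$ not by a direct combinatorial check but by projecting the level surface to the $x_1$-coordinate, analyzing the Reeb digraph of that Morse--Bott function, and invoking the realization/classification results of Gelbukh. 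Since $l_1 \geq 3$ is arbitrary, every genus $\geq 1$ occurs. So your proposal is correct only on the easy cases and misidentifies where the genus can come from; the decisive device is the three-dimensional $D$ with holes together with $S^0$ (branched-double) data, not higher tangency of a planar boundary curve and not gluing.
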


This generalizes the cases of $F=S^{m-1}$ ($m \geq 2$) and $F=S^k \times S^{m-k-1}$ ($1 \leq k \leq m-2$ with $m \geq 3$) in the case $m=3$: these two cases are essentially due to Theorem \ref{thm:1}.

The following is closely related to Problem \ref{prob:1} and Theorem \ref{thm:2}. This is also a kind of corollaries easily seen from the proof of Theorem \ref{thm:2}.
\begin{Thm}
\label{thm:3}
Let $F$ be a closed, connected and orientable surface. Let $a<b$ be real numbers.
Then there exist a $3$-dimensional non-singular real algebraic manifold $M$ being compact and connected and a smooth function ${f}_{M,a,b}:M \rightarrow \mathbb{R}$ with the following.
\begin{enumerate}
\item The function ${f}_{M,a,b}:M \rightarrow \mathbb{R}$ is the restriction of the canonical projection ${\pi}_{5,1}$.
\item The image is $[a,b]=\{a \leq t \leq b\}$ where only $a$ and $b$ are singular values of the function.
\item The  preimage ${{f}_{M,a,b}}^{-1}(p)$ is diffeomorphic to $F$ and each $p$. 
\end{enumerate}
\end{Thm}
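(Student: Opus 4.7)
The plan is to re-use the construction from the proof of Theorem \ref{thm:2}. That proof proceeds by an application of Theorem \ref{thm:1} with $n=1$ and a suitable choice of open interval $D$, polynomials $\{f_j\}_{j=1}^{l_1}$, sets $\{S_j\}$, and maps $m_{l_1,l_2}$, $m_{l_2}$, engineered so that the generic fiber of the resulting moment-like map is the prescribed closed, connected, orientable surface $F$. The output of Theorem \ref{thm:1} is a closed non-singular real algebraic manifold $M$ with $f(M)=\overline{D}$, whose singular values lie on $\partial D$. The submanifold $\tilde{M}$ used for Theorem \ref{thm:2} is just a sublevel set $f^{-1}([a,c])$ with one endpoint singular and the other regular, carved out of this closed $M$.

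For Theorem \ref{thm:3}, I would instead take the full closed $M$ itself together with its projection $f$, and identify the interval $\overline{D}$ with $[a,b]$. The image is then $[a,b]$, and the two boundary points are the only singular values, both of definite type by the local description in the proof of Theorem \ref{thm:1}; the generic interior fibers are all diffeomorphic to $F$ by the same local description. For the low-genus cases the data can be exhibited explicitly: for $F=S^2$, take $l_1=l_2=1$, $f_1(x)=(x-a)(b-x)$, $m_{l_2}(1)=2$, giving the quadric $\{(x,y) \in \mathbb{R}\times \mathbb{R}^3 \mid (x-a)(b-x)=||y||^2\}$, which is diffeomorphic to $S^3$, with generic fiber $S^2$; and for $F$ the torus, take $l_1=l_2=2$, $f_1(x)=x-a$, $f_2(x)=b-x$, $m_{l_2}(1)=m_{l_2}(2)=1$, giving a closed algebraic manifold in ${\mathbb{R}}^5$ with generic fiber $S^1 \times S^1$. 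If the construction produces $M$ inside some ${\mathbb{R}}^k$ with $k<5$, I would embed $M$ into ${\mathbb{R}}^5$ by adjoining coordinates fixed to zero, so that $f$ agrees with the restriction of ${\pi}_{5,1}$.

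The main obstacle is reproducing the closed orientable surface ${\Sigma}_g$ of genus $g \geq 2$ as the generic fiber, which requires the more intricate choice of data given in Theorem \ref{thm:2}'s proof. Granting that choice, the verification for Theorem \ref{thm:3} reduces to checking that the closed algebraic $M$ has image $[a,b]$ and only $\{a,b\}$ as singular values: both follow directly from Theorem \ref{thm:1}, since $f(M)=\overline{D}$ and the only singular values of the moment-like map lie on $\partial D$ by the local product structure extracted in its proof. Connectedness of $M$ comes from the connectedness of $D$ and of $F$, compactness from the boundedness of the $f_j$ on $\overline{D}$, and orientability from that of the interval and of $F$; this completes the plan.
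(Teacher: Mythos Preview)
Your proposal rests on a misreading of the proof of Theorem \ref{thm:2}: that proof does \emph{not} apply Theorem \ref{thm:1} with $n=1$ and $D$ an open interval. It uses $n=3$, with $D\subset{\mathbb{R}}^3$ a three-dimensional region bounded by the planes $S_1,S_2$, the cylinder $S_3$, and the additional cylinders $S_4,\ldots,S_{l_1}$. The moment-like map lands in ${\mathbb{R}}^3$, and the function $\tilde{f}_{\tilde{M},a,b}$ is obtained by composing it with the projection to the third coordinate of ${\mathbb{R}}^3$. This distinction is essential, not cosmetic: with $n=1$ the generic fiber of a moment-like map is a product of spheres ${\prod}_{j\in{\mathbb{N}}_{l_2}} S^{m_{l_2}(j)}$ (see the local description in the proof of Theorem \ref{thm:1}), so the only closed connected orientable surfaces that can arise this way are $S^2$ and $S^1\times S^1$. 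Your explicit genus-$0$ and genus-$1$ examples are correct, but they are precisely the ``trivial'' cases the paper already attributes to Theorem \ref{thm:1} just before stating Theorem \ref{thm:2}. For genus $g\geq 2$ there is no choice of data with $n=1$ that produces $\Sigma_g$ as a fiber, and so your plan of ``granting the choice of data from Theorem \ref{thm:2}'' while working over an interval $D$ cannot succeed.

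The paper's (tersely indicated) route to Theorem \ref{thm:3} keeps the same $n=3$ region $D$ and the same closed algebraic $M\subset{\mathbb{R}}^5$ built in the proof of Theorem \ref{thm:2}, and again composes the moment-like map into ${\mathbb{R}}^3$ with the projection to $x_3$. The image in $x_3$ is governed by the cylinder $S_3$, and the fiber over an interior value $p$ is the preimage of the two-dimensional slice $\overline{D}\cap\{x_3=p\}$, whose topology is identified as $\Sigma_g$ via the Reeb-graph analysis carried out there. To repair your argument you would need to work in this $n=3$ setting and verify that the composed function has singular values exactly at the two extreme $x_3$-values of $\overline{D}$; this requires analyzing how the boundary strata $S_j\cap\overline{D}$ meet the horizontal slices $\{x_3=\text{const}\}$, an issue that is invisible in your $n=1$ framework.
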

\section{A proof of our main result and additional comments.}
Hereafter, we need to investigate the {\it Reeb graphs} of smooth functions of a certain class. The notion has already appeared in \cite{reeb}. 
Reeb graphs are graphs and regarded as digraphs naturally.  
We do not explain elementary terminologies, notion and arguments on (di)graphs. We also expect related knowledge.
For a smooth function $c:X \rightarrow \mathbb{R}$ on a manifold with no boundary, we can define the following equivalence relation ${\sim}_{c}$ on $X$: $p_1 \sim p_2$ if and only if they are in a same connected component of $c^{-1}(q)$ for some value $q \in \mathbb{R}$. The quotient space $W_c:=X/{\sim}_c$ is the {\it Reeb space} of $c$ and $W_c$ has the structure of a graph by defining the vertex set as the set of all connected components containing some singular point of $c$ if $X$ is closed with the set of all singular values of $c$ being finite (\cite[Theorem 3.1]{saeki}). This is the {\it Reeb graph} of $c$.
We also have the quotient map $q_f:M \rightarrow W_f$, which is also continuous. 
We can also have the unique function $\overline{f}:W_f \rightarrow \mathbb{R}$ with $f=\bar{f} \circ q_f$. The function $\overline{f}$ is also continuous.

We prove Theorems \ref{thm:2} and \ref{thm:3}. For our proof, main ingredients are explicit construction of explicit moment-like maps. We abuse the notation of Theorem \ref{thm:1}. We can induce the orientation on the Reeb graph canonically and this is regarded as a digraph: the {\it Reeb digraph} of $f$. 
\begin{proof}[A proof of Theorem \ref{thm:2}.]
First, in the case $F$ is a sphere, then it is trivial from our arguments.

We discuss the case where $F$ is a closed, connected and orientable surface of genus at least $1$.

We can have an explicit case for Theorem \ref{thm:1} and our result as follows.
 
We put $n=3$. We put $l_1$ as an integer greater than or equal to $3$. We can choose the corresponding real polynomial functions $f_j$ canonically and suitably from the following hypersurfaces $S_j$ and define our connected bounded region $D$ surrounded by the $l_1$ hypersurfaces $S_j$ satisfying the 2nd condition of Theorem \ref{thm:1}. We can also check the 1st and the 3rd conditions of Theorem \ref{thm:1} easily. We explain $S_j$ precisely and explicitly.

We can put $S_1:=\{(s_1,t,t^{\prime}) \mid t, t^{\prime} \in \mathbb{R}\}$ and $S_2:=\{(s_2,t,t^{\prime}) \mid t, t^{\prime} \in \mathbb{R}\}$ be subspaces (or so-called {\it affine subspaces} of ${\mathbb{R}}^3$) with $s_1<s_2$. We can also choose a cylinder $S_3:=\{(t,t_1,t_2) \mid {t_1}^2+{(t_2-a)}^2={(b-a)}^2, t \in \mathbb{R}\}$ of a circle.
We define the straight line $L:=\{(t,0,a)\mid t \in \mathbb{R}\}$.

We can choose $s_1<s_2$ nicely beforehand to have $s_2-s_1>2(l-3)(b-a)$. In this situation, we can choose an increasing sequence $\{p_j\}_{j=1}^{l-3}$ of real numbers and $S_j$ ($4 \leq j \leq l$) with the following properties.
\begin{itemize}
\item Three relations $p_1-s_1>b-a$, $s_2-p_{l-3}>b-a$ and $p_{j+1}-p_{j}>2(b-a)$ hold. 
\item The cylinder $S_j=\{(t_1,t_2,t) \mid {(t_1-p_{j+3})}^2+{t_2}^2={(b-a)}^2, t \in \mathbb{R}\}$ of a circle is defined. If the cylinder is restricted to the circle $\{(t_1,t_2,a) \mid {(t_1-p_{j+3})}^2+{t_2}^2={(b-a)}^2\}$, then the resulting circle and the straight line $L:=\{(t,0,a)\mid t \in \mathbb{R}\}$ intersect in a one-point set and the tangent spaces of them at the single point agree.
\end{itemize}

FIGURE \ref{fig:1} shows our situation.

\begin{figure}
	\includegraphics[width=80mm,height=80mm]{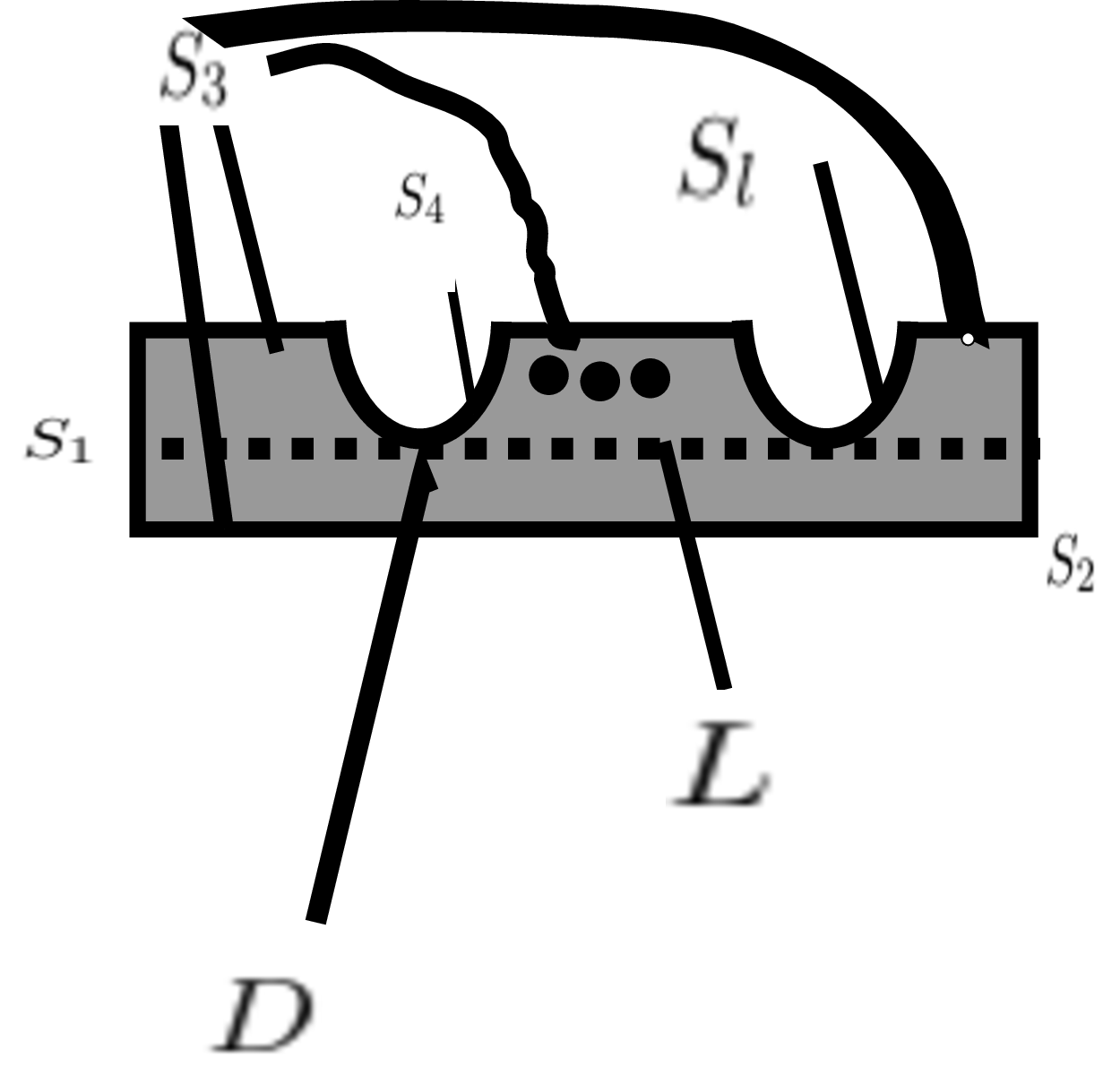}
	\caption{Our desired region $D$, restricted to the subspace $\{(t,t^{\prime},a) \mid t, t^{\prime} \in \mathbb{R}\}$, in Theorem \ref{thm:2}. Note that the space $\{(t,t^{\prime},a) \mid t, t^{\prime} \in \mathbb{R} \}$ is also an affine subspace of ${\mathbb{R}}^3$.}
	\label{fig:1}
\end{figure}

We must not forget defining $m_{l_1,l_2}$ and $m_{l_2}$. We put $l_2:=2$. We can set $m_{l_1,l_2}(i):=1$ for $i \neq 3$, $m_{l_1,l_2}(3):=2$, and $m_{l_2}(i)=0$ for $i=1,2$.

We have our defired map $f_{(D,\{f_j\}_{j=1}^{l_1},\{S_j\}_{j=1}^{l_1},m_{l_1,l_2},m_{l_2})}$. Our desired function ${\tilde{f}}_{\tilde{M},a,b}:\tilde{M} \rightarrow \mathbb{R}$ is defined as the composition of $f_{(D,\{f_j\}_{j=1}^{l_1},\{S_j\}_{j=1}^{l_1},m_{l_1,l_2},m_{l_2})}$ with the projection to the 3rd component of ${\mathbb{R}}^3$ where the type of $F$ is not determined yet. The type of $F$ is determined by considering the projection of  ${{\tilde{f}}_{\tilde{M},a,b}}^{-1}(a) \subset {\mathbb{R}}^5$ to the first component ($x_1 \in \mathbb{R}$), investigating the Reeb graph of the resulting new function  and consulting \cite{gelbukh1} (with \cite{gelbukh2}) and elementary arguments on Morse(-Bott) functions and elementary topological theory of surfaces. The function is regarded as a Morse-Bott function on a closed and connected surface $F$ due to our construction of the map with local properties of the maps. Let the function be denoted by ${f^{\prime}}_{\tilde{M},a,b,F}$. Furthermore, its Reeb digraph is shown to be a graph whose 1st Betti number is $l-2$ and which has exactly two vertices where the function $\overline{f}:=\overline{{{f}^{\prime}}_{\tilde{M},a,b,F}}:W_{{{f}^{\prime}}_{\tilde{M},a,b,F}} \rightarrow \mathbb{R}$ has extrema and exactly $2(l-3)$ vertices where the function does not have extrema. At the latter $2(l-3)$ vertices the function does not have local extrema. The first two vertices are of degree $2$. The $2(l-3)$ vertices are of degree $3$ and to each of these vertices exactly one singular point of the function is mapped.
Thanks to \cite{gelbukh1} (\cite{gelbukh2}) with a kind of fundamental arguments on Morse(-Bott) functions and elementary topological theory of surfaces, 
$F$ is shown to be a closed, connected and orientable surface of genus $l-2$.

Check also FIGURE \ref{fig:2} for our Reeb graph.
\begin{figure}
	\includegraphics[width=80mm,height=80mm]{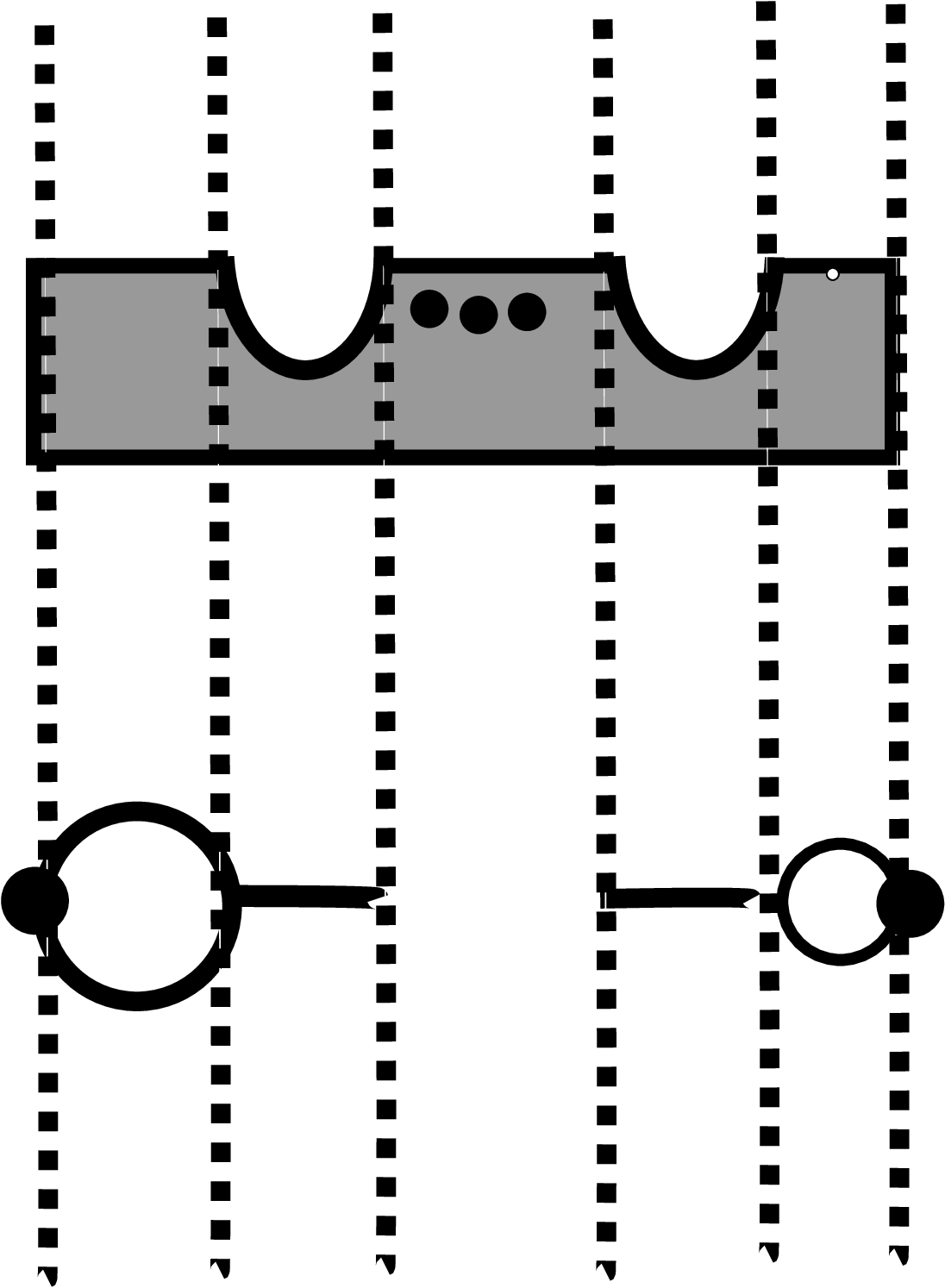}
	\caption{Our Reeb digraph for Theorem \ref{thm:2}.}
	\label{fig:2}
\end{figure}
This completes the proof.
\end{proof}
Theorem \ref{thm:3} can be checked easily from the proof of Theorem \ref{thm:2}. This is regarded as a kind of positive answers to problems on reconstruction of nice smooth or real algebraic functions with prescribed preimages of single points. Such stuides were essentially started by the author in \cite{kitazawa1, kitazawa2}.
\section{Our additional comments and result and future problems.}
\subsection{Our additional comments and result on Problem \ref{prob:1} and Theorem \ref{thm:2}.}
In Theorem \ref{thm:2}, in the case $F$ is of genus $1$, $2$ or $3$. We can give our additional comments and result.

We can have a related explicit case for Theorem \ref{thm:1} and our result as follows.
 
We put $n=3$. We can choose the corresponding real polynomial functions $f_j$ canonically and suitably from the following hypersurfaces $S_j$ and define our connected bounded region $D$ surrounded by the hypersurfaces $S_j$ satisfying the 2nd condition of Theorem \ref{thm:1}. We can also check the 1st and the 3rd conditions of Theorem \ref{thm:1} easily. We explain $S_j$ precisely and explicitly.
We can choose $S_1:=\{(s_1,t,t^{\prime}) \mid t, t^{\prime} \in \mathbb{R}\}$ and $S_2:=\{(s_2,t,t^{\prime}) \mid t, t^{\prime} \in \mathbb{R}\}$ be (affine) subspaces of ${\mathbb{R}}^3$, satisfying $s_1<s_2$, as in the proof of Theorem \ref{thm:2}.
We can choose a cylinder $S_3:=\{(t,t_1,t_2) \mid {t_1}^2+{(t_2-a)}^2={(b-a)}^2, t \in \mathbb{R}\}$ of a circle as in our previous proof. 

We can define the cylinder $S_4=\{(t_1,t_2,t) \mid  t \in \mathbb{R}, (t_1,t_2) \in S_{4,a}\}$ of the uniquely defined
straight line $S_{4,a}:=S_{4,{\rm L},a}:=\{(u\frac{s_1+s_2}{2}+(1-u)s_1,u(b-a),a) \mid u \in \mathbb{R}\} \subset \{(t,t^{\prime},a) \mid t,t^{\prime} \in \mathbb{R}\}$ 
or a circle $S_{4,a}:=S_{4,{\rm C},a} \subset \{(s,s^{\prime},a) \mid s,s^{\prime} \in \mathbb{R}\}$ passing two points $(s_1,0,a)$ and $(\frac{s_1+s_2}{2},b-a,a)$ centered at a point $(p_{4,0,1},p_{4,0,2},a)$ satisfying $p_{4,0,1}<s_1$ and $p_{4,0,2}>b-a$.
We can also define the cylinder $S_5=\{(t_1,t_2,t) \mid  t \in \mathbb{R} ,(t_1,t_2) \in S_{5,a}\}$ of the uniquely defined
straight line $S_{5,a}:=S_{5,{\rm L},a}:=\{(u\frac{s_1+s_2}{2}+(1-u)s_2,u(a-b),a) \mid u \in \mathbb{R}\} \subset \{(t,t^{\prime},a) \mid t,t^{\prime} \in \mathbb{R}\}$ 
or another circle $S_{5,a}:=S_{5,{\rm C},a} \subset \{(t,t^{\prime},a) \mid t ,t^{\prime} \in \mathbb{R}\}$ passing two points $(s_2,0,a)$ and $(\frac{s_1+s_2}{2},a-b,a)$ centered at a point $(p_{5,0,1},p_{5,0,2},a)$ satisfying $p_{5,0,1}>s_2$ and $p_{5,0,2}<a-b$.

We must not forget defining $m_{l_1,l_2}$ and $m_{l_2}$. We put
$l_1=3,4,5$. We define $l_2:=2$ if $l_1=3$ and $l_2=3$ if $l_1=4,5$. We can set $m_{l_1,l_2}(i):=1$ for $i=1,2$, $m_{l_1,l_2}(3):=2$, $m_{l_1,l_2}(4)=m_{l_1,l_2}(5):=3$, and $m_{l_2}(i)=0$ for $i=1,2,3$.
FIGURE \ref{fig:3} shows our situation explicitly.
\begin{figure}
	\includegraphics[width=80mm,height=80mm]{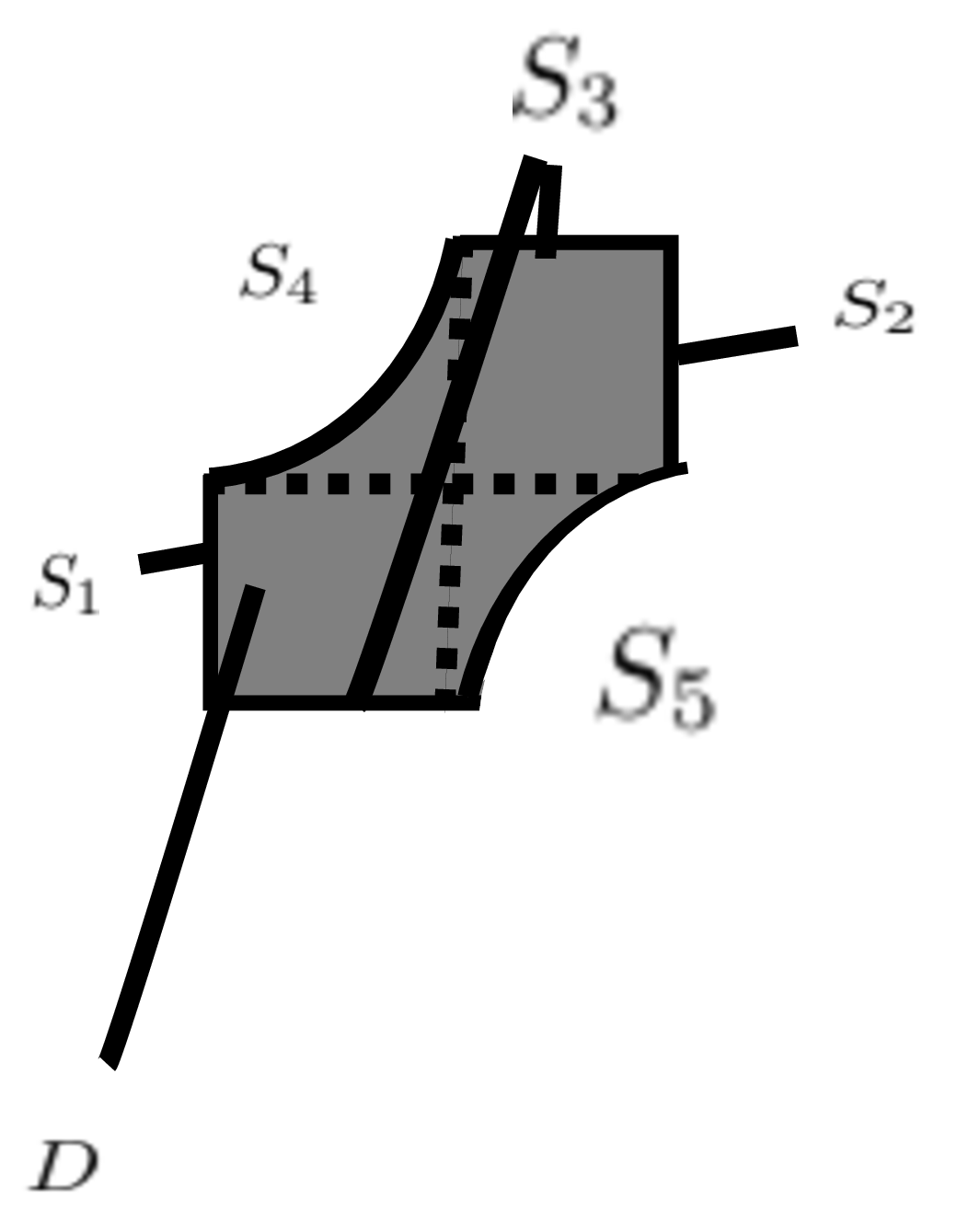}
	\caption{Our desired region $D$ restricted to $\{(t,t^{\prime},a) \mid t, t^{\prime} \in \mathbb{R}\}$ (in the $l_1=5$ case), in the additional study of Theorem \ref{thm:2}. The set $\{(t,t^{\prime},a) \mid t, t^{\prime} \in \mathbb{R}\}$ is also an affine subspace of ${\mathbb{R}}^3$.}
	\label{fig:3}
\end{figure}
We have our defired map $f_{(D,\{f_j\}_{j=1}^{l_1},\{S_j\}_{j=1}^{l_1},m_{l_1,l_2},m_{l_2})}$. Our desired function ${\tilde{f}}_{\tilde{M},a,b}:\tilde{M} \rightarrow \mathbb{R}$ is defined as the composition of $f_{(D,\{f_j\}_{j=1}^{l_1},\{S_j\}_{j=1}^{l_1},m_{l_1,l_2},m_{l_2})}$ with the projection to the 3rd component ($x_3 \in \mathbb{R}$) of ${\mathbb{R}}^4$ where the type of $F$ is not determined yet. The type of $F$ is determined by considering the projection of  ${{\tilde{f}}_{\tilde{M},a,b}}^{-1}(a) \subset {\mathbb{R}}^{m^{\prime}}$ to the first component ($x_1 \in \mathbb{R}$), investigating the Reeb graph of the resulting function and consulting \cite{gelbukh1} (with \cite{gelbukh2}) with elementary arguments on Morse-Bott functions and fundamental topological arguments on surfaces, as in our proof of Theorem \ref{thm:2}: here $m^{\prime}=5$ ($m^{\prime}=6$) in the case $(l_1,l_2)=(2,2)$ (resp. $l_1>2$ with $l_2=3$). 

We investigate these Reeb graphs. We abuse the notation in the proof of Theorem \ref{thm:2} for the functions the Reeb graphs of which we investigate. 
\\
\ \\
Case A. The $l_1=3$ case.  \\
\ \\ In this case, the Reeb graph is homeomorphic to a circle with exactly two vertices. In this case, by the arguments in Theorem \ref{thm:2}, $F$ is of genus $1$. \\
\ \\
Case B. The $l_1=4$ case.  \\
\ \\ In this case, the Reeb graph is shown to be a graph whose 1st Betti number is $2$ and which has exactly three vertices where the function $\overline{f}:=\overline{{{f}^{\prime}}_{\tilde{M},a,b,F}}:W_{{{f}^{\prime}}_{\tilde{M},a,b,F}} \rightarrow \mathbb{R}$ has extrema and exactly two vertices where the function does not have extrema. Here for the latter two vertices the function does not have local extrema. The first three vertices are of degree $2$. The remaining two vertices are of degree $3$ and to each of these vertices exactly one singular point of the function is mapped. In this case, by arguments similar to ones in the proof of Theorem \ref{thm:2}, $F$ is of genus $2$. \\
\ \\
Case. C The $l_1=5$ case.  \\
\ \\ In this case, the Reeb graph is shown to be a graph whose 1st Betti number is $1$ and which has exactly two vertices where the function $\overline{f}:=\overline{{{f}^{\prime}}_{\tilde{M},a,b,F}}:W_{{{f}^{\prime}}_{\tilde{M},a,b,F}} \rightarrow \mathbb{R}$ has extrema and exactly two vertices where the function does not have extrema: for the latter vertices the function does not have local extrema. The first two vertices are of degree $2$. The remaining two vertices are of degree $2$ and to each of these vertices exactly two singular points of the function are mapped. In this case, by arguments similar to ones in the proof of Theorem \ref{thm:2}, $F$ is of genus $3$. \\

Check also FIGURE \ref{fig:4} for such a Reeb graph.
\begin{figure}
	\includegraphics[width=80mm,height=80mm]{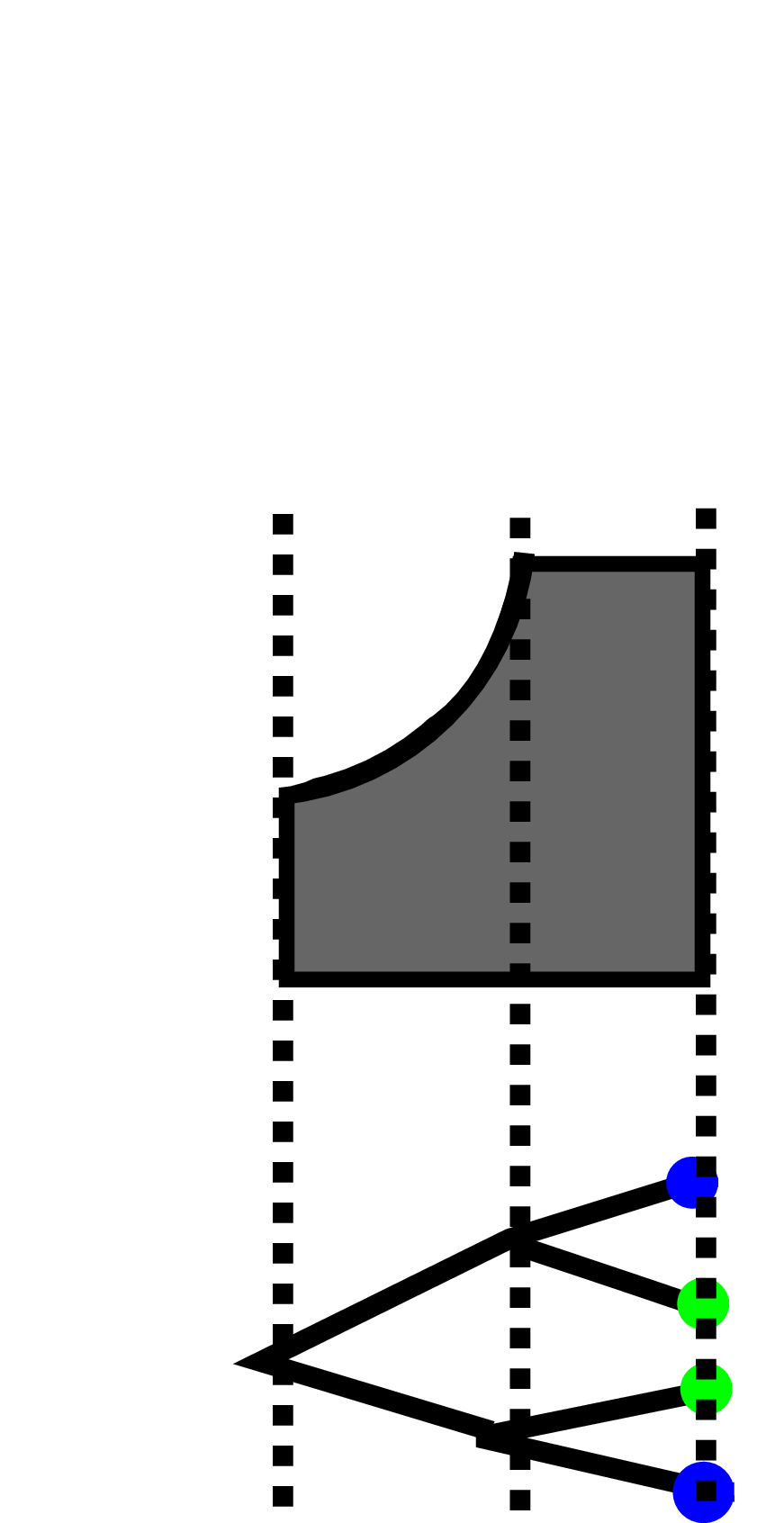}
	\caption{Our Reeb digraph in the case $l_1=3$: the two blue dots are identified and the two green dots are also identified.}
	\label{fig:4}
\end{figure}
\subsection{Our future problems.}

\begin{Prob}
\label{prob:2}
Do our maps of Theorem \ref{thm:2} topologically equivalent to important maps in \cite{kitazawa1, kitazawa5}?
\end{Prob}
These maps seem to be very similar topologically where we do not reach related positive idea yet.
\begin{Prob}
\label{prob:3}
Can we find generalizations of moment-like maps with respect to the preimages of single points. In short, can we generalize Theorem \ref{thm:1} with respect to preimages of single points. For example, in the case the difference of the manifold of the domain and that of the target is $2$, the preimages are either diffeomorphic to $S^2$ or $S^1 \times S^1$. It may be important and difficult to construct such maps locally or around singular values (of the maps). 
This may be regarded as a higher dimensional version of Problem \ref{prob:1}: the case where the dimension of the space of the target is higher than $1$. 
\end{Prob}
Related to this, for example, in the previous subsection, we consider the case $l_1=3$ again in a revised form. Roughly, we do as follows.

We put $n=4$. We replace $S_1$ and $S_2$ into the cylinder of a circle as has been done in several scenes here and change the notation for this new subset to $S_1$, and change the notation for "$S_3$" to "$S_2$", without changing the subset.  We put $l_2:=2$. We can set $m_{l_1,l_2}(i):=i$ for $i=1,2$ and $m_{l_2}(i)=0$ for $i=1,2$.
We can choose the cylinders $S_1:=\{(t_1,t,t_2,t^{\prime}) \mid {(t_1-\frac{s_1+s_2}{2})}^2+{t_2}^2=\frac{{(s_2-s_1)}^2}{4}, t \in \mathbb{R},  t^{\prime} \in \mathbb{R}\}$ 
 and $S_2:=\{(t,t_1,t^{\prime},t_2) \mid {t_1}^2+{(t_2-a)}^2={(b-a)}^2, t \in \mathbb{R},  t^{\prime} \in \mathbb{R}\}$. 

We construct the moment-like map similarly. We consider the composition of the resulting moment-like map into ${\mathbb{R}}^4$ with the projection to the (ordered) pair $(x_3,x_4) \in {\mathbb{R}}^2$ of the 3rd and the 4th components of ${\mathbb{R}}^4$. This is locally regarded as a moment-like map into ${\mathbb{R}}^2$. It may be natural to expect the case where the preimages $F$ of single points in the interior of the image are of a higher degree. We do not have positive idea yet.

For smooth maps locally moment-like maps, see also \cite{kobayashi, kobayashiyamamoto}. 
There local forms of moment maps on complex projective spaces and more generally toric symplectic manifolds are regarded as main objects. They are investigated as smooth maps in the studies.
These studies are concerned with visualizations of geometric objects and important objects in singularity theory and understanding explicit smooth maps and manifolds of the domains through such visualizations.
Note again that explicit systematic construction of local and global real algebraic maps were essentially started by the author \cite{kitazawa2, kitazawa6, kitazawa7, kitazawa8}: terminologies such as moment-like maps are due to the author \cite{kitazawa7}. 

\begin{Prob}
\label{prob:4}
After we succeed in construction of maps in Problem \ref{prob:3}, can we investigate linear perturbations of these maps and have so-called generic maps in singularity theory. 
\end{Prob}
Problem \ref{prob:4} asks whether we can generalize some of main ingredients of \cite{kobayashi, kobayashiyamamoto}.\\
\ \\
\noindent {\bf Conflict of interest.} \\
The author is a researcher at Osaka Central
Advanced Mathematical Institute (OCAMI researcher), supported by MEXT Promotion of Distinctive Joint Research Center Program JPMXP0723833165. He is not employed there. Our study thanks this. \\
\ \\
{\bf Data availability.} \\
No other data are not generated. 


\begin{thebibliography}{25}
	\bibitem{buchstaberpanov} V. M. Buchstaber and T. E. Panov, \textsl{Toric topology}, Mathematical Surveys and Monographs, Vol. 204, American Mathematical Society, Providence, RI, 2015.
	%

\bibitem{bochnakcosteroy} J. Bochnak, M. Coste and M.-F. Roy, \textsl{Real algebraic geometry}, Ergebnisse der Mathematik und ihrer Grenzgebiete (3) [Results in Mathematics and Related Areas (3)], vol. 36, Springer-Verlag, Berlin, 1998. Translated from the 1987 French original; Revised by the authors.
\bibitem{bott} R. Bott, \textsl{Nondegenerate critical manifolds}, Ann. of Math. 60 (1954), 248--261.
	\bibitem{delzant} T. Delzant, \textsl{Hamiltoniens p\'eriodiques et images convexes de l'application moment}, Bull. Soc. Math. France 116 (1988), No. 3, 315--339.
\bibitem{gelbukh1} I. Gelbukh, \textsl{Realization of a digraph as the Reeb graph of a Morse-Bott function on a given surface}, Topology and its Applications, 2024. 
\bibitem{gelbukh2} I. Gelbukh, \textsl{Reeb Graphs of Morse-Bott Functions on a Given Surface}, Bulletin of the Iranian Mathematical Society, Volume 50 Article number 84, 2024.
\bibitem{golubitskyguillemin} M. Golubitsky and V. Guillemin, \textsl{Stable Mappings and Their Singularities}, Graduate Texts in Mathematics (14), Springer-Verlag (1974).
\bibitem{kitazawa1} N. Kitazawa, \textsl{On Reeb graphs induced from smooth functions on $3$-dimensional closed orientable manifolds with finitely many singular values}, Topol. Methods in Nonlinear Anal. Vol. 59 No. 2B, 897--912, arXiv:1902.08841.
\bibitem{kitazawa2} N. Kitazawa, \textsl{Real algebraic functions on closed manifolds whose Reeb graphs are given graphs}, Methods of Functional Analysis and Topology Vol. 28 No. 4 (2022), 302--308, arXiv:2302.02339, 2023.
\bibitem{kitazawa3} N. Kitazawa, \textsl{Explicit construction of explicit real algebraic functions and real algebraic manifolds via Reeb graphs}, Algebraic and geometric methods of analysis 2023 “The book of abstracts”, 49—51, this is the abstract book of the conference "Algebraic and geometric methods of analysis 2023" and published after a short review (https://www.imath.kiev.ua/$\sim$topology/conf/agma2023/), https://imath.kiev.ua/$\sim$topology/conf/agma2023/contents/abstracts/texts/kitazawa/kitazawa.pdf, 2023.



\bibitem{kitazawa4} N. Kitazawa, \textsl{Realization problems of graphs as Reeb graphs of Morse functions with prescribed preimages}, submitted to a refereed journal, arXiv:2108.06913.
\bibitem{kitazawa5} N. Kitazawa, \textsl{Smooth maps like special generic maps}, arXiv:2301.12126.
\bibitem{kitazawa6} N. Kitazawa, \textsl{Construction of real algebraic functions with prescribed preimages}, arXiv:2303.00953v3.
\bibitem{kitazawa7} N. Kitazawa, \textsl{Reconstructing real algebraic maps locally like moment-maps with prescribed images and compositions with the canonical projections to the $1$-dimensional real affine space}, the title has changed from previous versions, arXiv:2303.10723, 2024.
\bibitem{kitazawa8} N. Kitazawa, \textsl{Some remarks on real algebraic maps which are topologically special generic maps}, submitted to a refereed journal, arXiv:2312.10646. 
\bibitem{kobayashi} M. Kobayashi, \textsl{On the cusped fan in a planar portrait of a manifold}, Geometriae Dedicata 162, 25--43, 2013/4.
\bibitem{kobayashiyamamoto} M. Kobayashi, \textsl{Generic foldings of ${\mathbb{R}}^a \times {\mathbb{R}}^b$ into ${\mathbb{R}}^2$ by two quadratic forms}, a talk in a conference (https://sites.google.com/view/suzukimasahiko70/home), in honor of Prof. Masahiko Suzuki, 2023/2.
\bibitem{kollar} J. Koll\'ar, \textsl{Nash's work in algebraic geometry}, Bulletin (New Series) of the American Matematical Society (2) 54, 2017, 307--324.
\bibitem{masumotosaeki} Y. Masumoto and O. Saeki, \textsl{A smooth function on a manifold with given Reeb graph}, Kyushu J. Math. 65 (2011), 75--84.
\bibitem{michalak} L. P. Michalak, \textsl{Realization of a graph as the Reeb graph of a Morse function on a manifold}. Topol. Methods in Nonlinear Anal. 52 (2) (2018), 749--762, arXiv:1805.06727.
\bibitem{milnor} J. Milnor, \textsl{Lectures on the h-cobordism theorem}, Math. Notes, Princeton Univ. Press, Princeton, N.J. 1965.
\bibitem{reeb} G. Reeb, \textsl{Sur les points singuliers d\'{}une forme de Pfaff compl\'{e}tement int\`{e}grable ou d\'{}une fonction num\'{e}rique}, Comptes Rendus Hebdomadaires des S\'{e}ances de I\'{}Acad\'{e}mie des Sciences 222 (1946), 847--849.
\bibitem{saeki} O. Saeki, \textsl{Reeb spaces of smooth functions on manifolds}, International Mathematics Research Notices, maa301, Volume 2022, Issue 11, June 2022, 3740--3768, https://doi.org/10.1093/imrn/maa301, arXiv:2006.01689.
\bibitem{sharko} V. Sharko, \textsl{About Kronrod-Reeb graph of a function on a manifold}, Methods of Functional Analysis and
 Topology 12 (2006), 389--396.

\end{thebibliography}
\end{document}